\theoremstyle{definition}
\newtheorem{theorem}{Theorem} 
\theoremstyle{definition}
\newtheorem{lemma}{Lemma}
\theoremstyle{definition}
\theoremstyle{definition}
\newtheorem{ex}{Example}  
\theoremstyle{definition}
\newtheorem{note}{Note} 
\title{Global dynamics of a periodic SEIRS model with general incidence rate}
\author{ Eric \'Avila-Vales, Erika Rivero-Esquivel and Gerardo Garc\'ia-Almeida \\
 \begin{small}
Facultad de Matem\'aticas, Universidad Aut\'onoma de Yucat\'an.
\end{small}  \\
\begin{small}
Anillo Perif\'erico Norte, Tablaje 13615, C.P. 97119.
M\'erida, M\'exico.
\end{small}}
\date{}
\begin{document}
\maketitle

\begin{abstract}
We consider a family of periodic SEIRS epidemic models with a fairly general incidence rate and it is shown the basic reproduction number determines the global dynamics of the models and it is a threshold parameter for persistence. Numerical simulations are performed to estimate the basic reproduction number and  illustrate our analytical findings, using a nonlinear incidence rate.
\end{abstract}

\section{Introduction} 
Epidemiological models have been recognized as valuable tools in analyzing the spread and control of infectious diseases. In the study of epidemiological models, incidence rate plays an important role.  An incidence rate is defined as the number of new health related events or cases of a disease in a population exposed to the risk in a given time period. Incidence rate has been developed by many authors. In order to model this disease transmission process several authors employ the incidence functions: The earliest one is the bilinear incidence rate $\beta SI$ used by Kermack and Mckendrick \cite{kermack1927contribution} in 1927, where $\beta, S$ and $I$ denote the transmission rate, the number of susceptible population and the infectious population respectively. It is based on the law of mass action  which is not realistic. So there is a need to modify the classical linear incidence rate to study the dynamics of infection among large population. In 1978, Capasso and Serio \cite{capasso1978generalization} introduced a saturated incidence rate by research of the Cholera epidemic spread in Bari. Also in 1978, May and Anderson \cite{anderson1978regulation} proposed the saturated incidence rate. \par 
 
In the present work we focus on SEIRS epidemic models. We improve the model of Moneim and Greenhalgh in \cite{moneim2005use}, introducing an incidence rate with a general function taken from \cite{bai2015threshold} and the references therein. \par 
We propose the following SEIRS model:

\begin{equation} \label{ec1}
\begin{aligned}
\frac{dS}{dt} &= \mu N(1-p) - \beta(t)Sf(I)-( \mu+r(t) )S + \delta R \\ 
\frac{dE}{dt} &= \beta(t) S f(I)- (\mu + \sigma )E \\
\frac{dI}{dt} &= \sigma E - (\mu + \gamma )I \\
\frac{dR}{dt} &= \mu N p+ r(t)S+ \gamma I - (\mu + \delta) R.
\end{aligned}
\end{equation}

Where $N=S+E+I+R$ is the total population size, with $S,E,I,R$ denoting the fractions of population that are suceptible, exposed, infected and revovered, respectively. $ \beta(t) $ is the transmission rate and it is a continuous, positive T -periodic function. $p$ ($0\leq p \leq 1 $) is the vaccination rate of all new-born children. $r(t)$ is the vaccination rate of all susceptibles in the population and it is a continuous, positive periodic function with period $LT$ , where $L$ is an integer. $\mu$ is the common per capita birth and death rate. $\sigma, \gamma$ and $ \delta $ are the per capita rates of leaving the latent stage, infected stage and recovered stage, respectively. It is assumed that parameters are positive constants. \par 
Bai and Zhou in \cite{bai2012global} answered some open problems stated in \cite{moneim2005use} , they also shown that their condition is a treshold between persistence and extinction of the disease via the framework established in \cite{wang2008threshold}. They assumed that the incidence was bilinear. In our study, the nonlinear assumptions on function $f$ are listed below (see \cite{bai2015threshold} ):
\begin{itemize}
\item[A1)] $f: \mathbb{R}_+ \rightarrow \mathbb{R}_+ $ is continuously differentiable. 
\item[A2)] $f(0)=0, f'(0)>0$ and $f(I)>0$ for all $I>0$.
\item[A3)] $f(I)-If'(I) \geq 0 $.
\end{itemize}
 Under these assuptions, function $f(I)$ includes various types of incidence rate, particularly, when $f(I)=I$, we are on the bilinear case considered by Moneim. \par  In addition, we assume following extra conditions (see \cite{posny2014modelling}):
\begin{itemize}
\item[A4)] $f''(0)\leq 0$.
\item[A5)] There exists $ \epsilon^{*}>0 $ such that when $0<I< \epsilon^{*}$, $f(I) \geq f(0)+I f'(0)+ \frac{1}{2} I^{2} f''(0). $
\end{itemize}

This set of assumptions on the function f allows for more general incidence functions than the bilinear one, like saturated incidence functions and functions of the form $\beta SI/(1+kI^q)$, in particular in the case when $q>1$, they represent psychological or media effects depending on the infected population. In this last case the incidence function is non monotone on I. A3) regulates the value of $f(I)$ comparing it with the value at $I$ of a line containing the origin of slope $f'(I)$ (Note that this line varies as I increases), A4) requires a concave $f(I)$ at the origin, and A5) imposes the geometrical condition that in a small neighborhood of the origin $f(I)$ must lie between the tangent line of f at I and a concave parabola tangent to f at I. \par 
  The layout of this paper is as follows.In section 2,we
introduce the basic reproduction number via the theory developed in \cite{bacaer2006epidemic}, \cite{wang2008threshold}. In section 3, we adapt the arguments given in \cite{bai2012global} to prove that the disease free periodic solution is globally asymptotically stable if $ \mathcal{R}_0 <1$ and that if $\mathcal{R}_0>1$ system \eqref{ec1} is persistent.\par 
We consider a family of $SEIR$ models with periodic coefficients with general incidence rate in epidemiology. We show that the global dynamics is determined by the basic reproduction number $\mathcal{R}_0$ Our results generalize the ones in \cite{bai2012global}.

\section{The basic reproduction number}

First of all, we prove non-negativity of solutions under non-negative initial conditions. 

\begin{theorem}
Let  $S_0,E_0,I_0,R_0 \geq 0$, the solution of \eqref{ec1} with $$(S(0),E(0),I(0),R(0))=(S_0,E_0,I_0,R_0)$$ is non negative in sense that $S(t),E(t),I(t),R(t) \geq 0$ , $\forall t>0$, and satisfies $S(t)+E(t)+I(t)+R(t) = N$, with $N$ constant. \label{teo0} 
\end{theorem}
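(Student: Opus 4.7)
The plan is twofold: establish that the non-negative orthant $\mathbb{R}^4_+$ is forward-invariant along solutions of \eqref{ec1}, and then obtain the conservation law by summing the four equations.

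First, I would observe that local existence and uniqueness follow from standard ODE theory, since the right-hand side of \eqref{ec1} is $C^1$ in $(S,E,I,R)$ by assumption A1 and continuous in $t$ by continuity of $\beta$ and $r$; so for any non-negative initial datum there is a unique solution on some maximal interval $[0, t_{\max})$. Treating $N$ on the right-hand side of \eqref{ec1} as the fixed constant $N := S_0 + E_0 + I_0 + R_0$, I would then inspect the vector field on each coordinate face:
$$\dot S|_{S=0} = \mu N(1-p) + \delta R \geq 0, \qquad \dot E|_{E=0} = \beta(t)\,S\,f(I) \geq 0,$$
$$\dot I|_{I=0} = \sigma E \geq 0, \qquad \dot R|_{R=0} = \mu N p + r(t)\,S + \gamma I \geq 0,$$
each inequality using non-negativity of the remaining coordinates together with $f \geq 0$ on $\mathbb{R}_+$ (from A2) and the hypotheses $0 \leq p \leq 1$, $r(t) > 0$. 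A first-exit-time argument (equivalently, Nagumo's tangent cone theorem) then yields forward invariance of $\mathbb{R}^4_+$: if some coordinate were to become negative, at its first zero the remaining coordinates would still be non-negative, so its derivative would be non-negative there, contradicting the crossing.

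For the conservation law, set $W(t) := S(t)+E(t)+I(t)+R(t)$. Summing the four equations, the transfer terms $\pm \beta(t)Sf(I)$, $\pm \sigma E$, $\pm \gamma I$ and $\pm r(t)S$ all cancel, leaving
$$\dot W = \mu N(1-p) + \mu N p - \mu W = \mu N - \mu W.$$
Since $W(0) = N$, the auxiliary variable $W-N$ solves the scalar linear ODE $(W-N)' = -\mu(W-N)$ with zero initial condition, so $W(t) \equiv N$ throughout $[0, t_{\max})$. Combined with the positivity above, this traps the solution in the compact simplex $\{x \in \mathbb{R}^4_+ : x_1+x_2+x_3+x_4 = N\}$, and the standard extension criterion then gives $t_{\max} = +\infty$.

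I do not anticipate a serious obstacle: this is a routine invariance and conservation check. The only delicate point is logical sequencing: the symbol $N$ in \eqref{ec1} must be interpreted as the constant $W(0)$ (or, equivalently, the identity $W(t)\equiv N$ must be justified a posteriori) to avoid circularity when summing the equations. Every other step reduces to a sign inspection on the boundary of $\mathbb{R}^4_+$ or to an elementary scalar linear ODE.
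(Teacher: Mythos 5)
Your proposal is correct and follows essentially the same route as the paper: a sign inspection of the vector field on each coordinate face of $\mathbb{R}^{4}_{+}$ to establish forward invariance, followed by summing the four equations to obtain the conservation law. You supply some details the paper glosses over (local existence, the first-exit-time argument, the careful reading of $N$ as the constant $W(0)$ to avoid circularity, and global existence via the invariant compact simplex), but the underlying argument is the same.
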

\begin{proof}
Let $x(t)=(S(t),E(t),I(t),R(t))$ be the solution of system \eqref{ec2} under initial conditions $x_0=(S(0),E(0),I(0),R(0))=(S_0,E_0,I_0,R_0) \geq 0$, by continuity of solution, for all of $S(t),E(t),I(t)$ and $R(t)$ that have a positive initial value at $t=0$, we have the existence of an interval $(0,t_0)$ such that $S(t),E(t),I(t),R(t) \geq 0$ for $0<t<t_0$. We will prove that $t_0= \infty$. \par 
If $S(t_1)=0$ for a $t_1\geq 0$ and other components remain non-negative at $t=t_1$, then 
$$ \frac{dS}{dt}(t=t_1)= \mu N(1-p)+ \delta R \geq 0,$$
this implies that whenever the solution $x(t)$ touches the $S$-axis, the derivative of $S$ is non decreasing and the function $S(t)$ does not cross to negative values. Similarly: 
When $E(t_1)=0$ for a $t_1>0$ and other components remain non-negative:
$$ \frac{dE}{dt} (t=t_1) = \beta(t)S f(I) \geq 0. $$
When $I(t_1)=0$ for a $t_1>0$ and other components remain non-negative:
$$ \frac{dI}{dt} (t=t_1) = \sigma E \geq 0. $$
Finally, when $R(t_1)=0$ for a $t_1>0$ and other components remain non-negative:
$$ \frac{dR}{dt} (t=t_1) = \mu Np+r(t)S+ \gamma I \geq 0. $$
Therefore, whenever $x(t)$ touches any of the axis $S=0,E=0,I=0,R=0$, it never crosses them. \par 
Now, let $N(t)=S(t)+E(t)+I(t)+R(t)$, then adding all equations of system we can see that $\frac{dN}{dt} =0$, so the value of $N$ is constant.
\end{proof}
To reduce the system \eqref{ec1}, let  With $R=N-S-E-I$,  system \eqref{ec1} is reduced to 

\begin{equation} \label{ec2}
\begin{aligned}
\frac{dS}{dt} &= \mu N(1-p)- \beta(t)S f(I)-(\mu + r(t))S+ \delta(N-S-E-I), \\
\frac{dE}{dt} &= \beta(t)Sf(I)- ( \mu + \sigma )E, \\
\frac{dI}{dt} &= \sigma E - ( \mu + \gamma )I.
\end{aligned}
\end{equation}
 
The dynamics of system \eqref{ec1} is equivalent to that of \eqref{ec2}. Moreover, due to positivity of solutions $S+E+I \leq N$, so we study the dynamic of system \eqref{ec2}  in  the region 
\begin{equation}
X= \{ (S,E,I) \in \mathbb{R}_+^{3}: S+E+I \leq N \} \label{ec10}.
\end{equation} 
 A  disease free periodic  solution can be found for \eqref{ec2}. To find it, set $E=0=I$, then from first equation of \eqref{ec2}:

\begin{equation}
\dfrac{dS}{dt} = \mu N(1-p)-( \mu+r(t) ) S+ \delta (N-S), \quad S(0)= S^{0} \in \mathbb{R}_+. \label{ec12}
\end{equation} 
From \cite{bai2012global} and \cite{moneim2005use}, the equation above admits a unique positive LT-periodic solution given by:
\begin{equation} \label{ec19}
\hat{S}(t) = e^{-\int_0^{t}( \mu+r(s)+ \delta )ds} \left( \hat{S}(0)+ N( \mu(1-p)+ \delta ) \int_0^{t} e^{\int_0^{s} (\mu+r( \xi ) + \delta) d \xi}ds \right),
\end{equation}
where $$ \hat{S}(0) = \dfrac{N( \mu(1-p)+ \delta ) \int_0^{LT} e^{\int_0^{s} ( \mu+r(\xi)+ \delta )d \xi} ds }{ e^{\int_0^{LT}(\mu+r(s)+\delta)ds} -1} .$$
Then, $(\hat{S}(t),0,0)$ is a periodic infection free solution of \eqref{ec2}, moreover, from \cite{bai2012global} we have that $\hat{S}(t)<N$, therefore, $( \hat{S}(t),0,0 ) $ lives in $X$. \par 
Using the notation of \cite{van2002reproduction}, we sort the compartments so that the first 2 compartments correspond to infected individuals. Let $x=(E,I,S)$ and define
\begin{itemize}
\item  $\mathcal{F}_i$: the rate of new infection in compartment i.
\item $\mathcal{V}_i^{+}: $ the rate of individuals into compartment i by other means.
\item $\mathcal{V}_i^{-}:$ the rate of transfer individuals out of compartment i.
\end{itemize}

 System can be written as 
\begin{align}
 x'(t) &= \left(
 \begin{matrix}
 \beta(t)Sf(I)- ( \mu + \sigma )E \\
 \sigma E - ( \mu + \gamma )I \\
 \mu N(1-p)- \beta(t)S f(I)-(\mu + r(t))S+ \delta(N-S-E-I)
\end{matrix}  \right) \nonumber \\
 &= \mathcal{F}- \mathcal{V}, \label{ec3}
\end{align}
where  $ \mathcal{V}= \mathcal{V}^{-}- \mathcal{V}^{+} $,
\begin{align}
\mathcal{F} &= \left( \begin{matrix}
\beta(t) S f(I) \\ 0\\0
\end{matrix} \right), \quad
\mathcal{V}^{+} = \left( \begin{matrix}
0 \\ \sigma E \\ \mu N(1-p) + \delta N
\end{matrix} \right), \nonumber \\
\mathcal{V}^{-}& = \left( \begin{matrix}
(\mu+ \sigma)E \\ (\mu + \gamma)I \\ \beta(t)Sf(I)+ \delta(S+E+I)+(\mu+r(t))S
\end{matrix} \right).
\end{align}

Linearizing system \eqref{ec3} around the disease free solution, we obtain the matrix of partial derivatives $J(0,0,\hat{S}) = D \mathcal{F}(0,0,\hat{S})- D \mathcal{V}(0,0,\hat{S})$, where 

\begin{align}
D \mathcal{F}(0,0,\hat{S}) &= \left( \begin{matrix}
0 & \beta(t) \hat{S} f'(0) & 0 \\ 0 & 0 & 0 \\ 0& 0& 0
\end{matrix} \right) \\
D \mathcal{V} ( 0,0,\hat{S} ) &= \left( 
\begin{matrix}
\mu + \sigma & 0 & 0 \\ - \sigma & \mu+ \gamma & 0 \\ \delta & \beta(t) \hat{S} f'(0)+ \delta &  \delta + \mu + r(t)
\end{matrix}
\right).
\end{align}
Using lemma 1 of \cite{van2002reproduction}, we part $ D \mathcal{F} $ and $ D \mathcal{V} $  and set 
\begin{equation}
F(t) = \left( \begin{matrix}
0 & \beta(t) \hat{S} f'(0) \\ 0 & 0 
\end{matrix} \right), \quad
V(t) = \left( 
\begin{matrix}
\mu + \sigma & 0 \\ - \sigma & \mu+ \gamma 
\end{matrix}
\right). \label{ec6}
\end{equation}
For a compartmental epidemiological model based on an autonomous system, the basic reproduction
number is determined by the spectral radius of the next-generation matrix $FV^{-1}$(which is
independent of time) \cite{van2002reproduction}. The definition of basic reproduction number for non autonomous systems has been studied for multiple authors, see for example \cite{bacaer2006epidemic} and \cite{wang2008threshold}. Particularly, Wang and Zhao in \cite{wang2008threshold} extended the work of \cite{van2002reproduction} to include epidemiological models in periodic environments. They introduced the next infection operator  $\mathcal{L}: C_{LT} \rightarrow C_{LT} $ given by

\begin{equation}
( \mathcal{L} \phi )(t) = \int_0^{\infty} Y(t,t-a)F(t-a)\phi(t-a)da, \quad \forall t \in \mathbb{R}, \phi \in C_{LT},
\end{equation}
  where $C_{LT}$ is the ordered Banach space of all $LT$ periodic functions form $ \mathbb{R} $ to $ \mathbb{R}^{2} $, which is equipped with the maximum norm. $ \phi(s) \in C_{LT} $ is the initial distribution of infectious individuals in this periodic environment, and $Y(t,s)$, $t \geq s$ is the evolution operator of the linear periodic system:
  \begin{equation}
  \dfrac{dy}{dt} = -V(t) y,
    \end{equation}
 that means, for each $ s \in \mathbb{R}$, the 2x2 matrix $Y$ satisfies
 \begin{equation}
 \dfrac{dY(t,s)}{dt} = -V(t)Y(t,s), \quad \forall t \geq s, Y(s,s)=I_{2\times 2}.
\end{equation}    

$ \mathcal{L} \phi $ is the distribution of accumulative new infections at time $t$ produced by all those infected individuals $ \phi(s) $ introduced before $t$, with kernel $K(t,a)= Y(t,t-a)F(t-a) $. The coefficient $K_{i,j}(t,a)$ in row $i$ and column $j$ represents the expected number of individuals in compartment $I_i$ that one individual in compartment $I_j$ generates at the beginning of an epidemic per unit time at time $t$ if it has been in compartment $I_j$ for $a$ units of time, with $I_1=E, I_2=I$ \cite{bacaer2007approximation}.\par  
Let $r_0>0,$ $r_0$ is an eigenvalue of $\mathcal{L}$ if there is a nonnegative eigenfunction $v(t) \in C_{LT}$ such that

\begin{equation}
\mathcal{L} v= r_0 v.
\end{equation}

  Therefore, the basic reproduction number is defined as 
\begin{equation}
\mathcal{R}_0 := \rho( \mathcal{L} ), \label{ec4}
\end{equation}
 the spectral radius of $ \mathcal{L}$. The basic reproduction number can be evaluated by several numerical methods and approximations (\cite{bacaer2007approximation}, \cite{mitchell2016reproductive},\cite{posny2014modelling}).
\section{The threshold dynamics of $R_0$}

\subsection{Disease extinction}
\begin{theorem}
Let $ \mathcal{R}_0 $ be defined as \eqref{ec4}, then the disease free periodic solution $( \hat{S}(t),0,0 )$ is asymptotically stable if $ \mathcal{R}_0<1 $ and unstable if $ \mathcal{R}_0>1 $ \label{teo1} .
\end{theorem}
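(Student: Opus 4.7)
The plan is to reduce the stability question to the framework of Wang and Zhao \cite{wang2008threshold} and then invoke their Theorem 2.2, which translates $\mathcal{R}_0 \lessgtr 1$ into a spectral statement about a monodromy matrix on the infected subspace. Once that translation is in place, Floquet theory yields local (asymptotic) stability or instability of the disease-free periodic solution $(\hat S(t),0,0)$.

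First I would verify that the decomposition $\mathcal{F},\mathcal{V}^{+},\mathcal{V}^{-}$ introduced above satisfies the seven structural hypotheses (A1)--(A7) of \cite{wang2008threshold}. Conditions (A1)--(A5) are purely algebraic: non-negativity of the entries, the fact that $\mathcal{F}$ is supported only on infected compartments ($E,I$), that $\mathcal{V}^{+}_{i}=0$ for $i=1,2$, and that transfer terms out of an empty compartment vanish; all of these are read off from the explicit expressions for $\mathcal{F},\mathcal{V}^{+},\mathcal{V}^{-}$. Condition (A6) asks for a globally attracting disease-free periodic solution in the $S$-subsystem, which is exactly equation \eqref{ec12} with the explicit $LT$-periodic attractor $\hat S(t)$ given in \eqref{ec19}. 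Condition (A7) requires $\rho(\Phi_{M}(LT))<1$, where $M(t)$ is the Jacobian of the non-infected subsystem evaluated along the disease-free solution; here $M(t) = -(\mu+r(t)+\delta)$ is scalar and negative, so its monodromy factor $\exp\!\bigl(-\int_{0}^{LT}(\mu+r(s)+\delta)\,ds\bigr)$ is strictly less than $1$.

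Next I would examine the linearization of \eqref{ec3} at $(0,0,\hat S(t))$. Because the third equation decouples from $(E,I)$ at first order up to the stable block above, the stability of the full system reduces to that of the infected block
\begin{equation*}
\frac{d}{dt}\begin{pmatrix}E\\I\end{pmatrix} = \bigl(F(t)-V(t)\bigr)\begin{pmatrix}E\\I\end{pmatrix},
\end{equation*}
with $F(t),V(t)$ as in \eqref{ec6}. By Theorem 2.2 of \cite{wang2008threshold}, under hypotheses (A1)--(A7) one has the equivalences $\mathcal{R}_{0}<1\iff \rho(\Phi_{F-V}(LT))<1$ and $\mathcal{R}_{0}>1\iff \rho(\Phi_{F-V}(LT))>1$, where $\Phi_{F-V}(LT)$ is the monodromy matrix of the infected subsystem over one common period $LT$.

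Finally, classical Floquet theory for periodic linear systems tells us that the trivial solution of the infected subsystem is asymptotically stable exactly when $\rho(\Phi_{F-V}(LT))<1$ and unstable when $\rho(\Phi_{F-V}(LT))>1$; combined with the stable behavior of the $S$-component coming from (A7), this lifts to local asymptotic stability (resp. instability) of $(\hat S(t),0,0)$ in the full system. The main obstacle I anticipate is a careful verification of hypothesis (A7) and the mild bookkeeping needed to justify that the instability of the infected block implies instability of the full solution (the block-triangular structure of $J(0,0,\hat S(t))$ makes this straightforward, but it must be stated).
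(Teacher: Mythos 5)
Your proposal follows essentially the same route as the paper: verify hypotheses (A1)--(A7) of Wang and Zhao \cite{wang2008threshold} for the decomposition $\mathcal{F},\mathcal{V}^{\pm}$ and then invoke their Theorem 2.2, which gives the equivalence of $\mathcal{R}_0\lessgtr 1$ with $\rho(\Phi_{F-V}(LT))\lessgtr 1$ and hence local asymptotic stability or instability of $(\hat S(t),0,0)$. One correction: you have conflated Wang--Zhao's last two hypotheses. Their (A6) is the spectral condition $\rho(\Phi_{M}(LT))<1$ on the Jacobian $M(t)=-(\mu+r(t)+\delta)$ of the disease-free subsystem (which you verified, but labeled (A7)), while their actual (A7) is the condition $\rho(\Phi_{-V}(LT))<1$ on the transfer matrix $V(t)$ of \eqref{ec6}, which your write-up never checks. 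The check is easy and should be added: $-V$ is a constant lower-triangular matrix with diagonal $-(\mu+\sigma),-(\mu+\gamma)$, so $\rho(\Phi_{-V}(LT))=\max\{e^{-(\mu+\sigma)LT},e^{-(\mu+\gamma)LT}\}<1$; this is exactly the computation the paper carries out via the explicit fundamental matrix.
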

\begin{proof}
We use theorem 2.2 of \cite{wang2008threshold}, and check conditions (A1)-(A7). Conditions (A1)-(A5) are clearly satisfied from definition of $ \mathcal{F} $ and $\mathcal{V}$ given in section 2. We prove only condition (A6) and (A7). Define
$$M(t):= -(\mu + r(t)+ \delta),$$
and let $\Phi_M(t)$ be the monodromy matrix of system
\begin{equation}
 \frac{dz}{dt}=M(t)z. \label{ec5}
\end{equation}

\begin{itemize}
\item [(A6)] $\rho(\Phi_M(LT))<1$. \par
Let $\Psi_M$ be a fundamental matrix for system $ \frac{dz}{dt}=M(t)z $, with $M$ defined as before and $LT$ periodic, the monodromy matrix $\Phi_M(LT)$ is given by $ \Phi_M (LT)= \Psi_M^{-1}(0) \Psi_M(LT) $. The general solution of \eqref{ec5} is $$z(t)=K \exp ( - \int_0^{t} (\mu + r(s)+ \delta)ds ),$$
so $\Psi_M = \exp ( - \int_0^{t} (\mu + r(s)+ \delta)ds ) $ and $ \Psi_M^{-1}= \exp (  \int_0^{t} (\mu + r(s)+ \delta)ds ) $. Note that $\Psi_M^{-1}(0)=1$, so $\Phi_M(LT)=\Psi_M(LT)$
$$ \Phi_M(LT)= \exp ( - \int_0^{LT} (\mu + r(s)+ \delta)ds ). $$
Due to the fact that $\Phi_M(LT)$ is a constant, its eigenvalue is itself and $\rho(\Phi_M(LT))<1$ for $\mu, \delta, r(s)>0$.
\item [(A7)] $\rho ( \Phi_{-V}(LT))<1$. \par 
Solving the system $ \frac{dz}{dt} = -V(t)z $, we arrive to the general solution 
$$ z(t)= c_1 \left( 
\begin{matrix}
\frac{\gamma- \sigma}{\sigma} \\ 1
\end{matrix}
 \right) e^{-(\mu+\sigma )t} +
 c_2 \left( 
\begin{matrix}
0 \\ 1
\end{matrix}
 \right) e^{-(\mu+\gamma )t}, $$
so 
\begin{equation}
\Psi_{-V}(t) = \left( 
\begin{matrix}
\frac{\gamma- \sigma}{\sigma} e^{-(\mu+\sigma )t} & 0 \\
e^{-(\mu+\sigma)t} & e^{-(\mu+\gamma )t}
\end{matrix}
 \right).
\end{equation}
Computing $ \Phi_{-V}(LT)=\Psi_{-V}^{-1} (0) \Psi_{-V}(LT) $ we have 
\begin{equation}
\Phi_{-V}(LT) = \left(
\begin{matrix}
e^{-(\mu+\sigma)LT} & 0 \\ 0 & e^{-(\mu+\gamma)LT}
\end{matrix} \right).
\end{equation}
Clearly, $\rho(\Phi_{-V}(LT))= \max \{ e^{-(\mu+\sigma)LT}, e^{-(\mu+\gamma)LT}  \} <1 $ for $\mu, \gamma, \sigma>0$.
\end{itemize}
\end{proof}

 \begin{note} Due to $\Psi_A$ is a fundamental solution of a periodic system, we can always choose it such that  $\Psi(0)=I$, so the monodromy matrix satisfies $\Phi_A(LT)= \Psi_A(LT)$. This property is used in further analysis. \label{not1}
 \end{note} 
In order to prove the global stability of the disease free periodic solution, we enunciate some useful definitions and some lemmas. \par 

Let $A(t)$ continuous, cooperative, irreducible and $\omega-$periodic $k \times k$ matrix function, and $\Psi_{A}(t) $ the fundamental matrix of system $ x'(t)=A(t)x(t) $. Denote by $ \rho( \Psi_{A}(\omega) ) $ the spectral radius of $ \Psi_{A}(\omega) $ .

\begin{lemma}
Let $p= \frac{1}{\omega} \ln \rho(\Psi_{A}(\omega)) $. Then there exists a positive, $\omega$-periodic function $v(t) $ such that $e^{pt}v(t)$ is a solution of $ x'(t)=A(t)x(t) $ (see proof in Lemma 2.1 of \cite{zhang2007periodic}). \label{lem1}
\end{lemma}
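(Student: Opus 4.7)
The plan is to combine Floquet's theorem with Perron--Frobenius theory for cooperative irreducible systems. Since $A(t)$ is $\omega$-periodic and $\Psi_A(0)=I$ (by Note~\ref{not1}), the fundamental matrix satisfies the cocycle identity $\Psi_A(t+\omega)=\Psi_A(t)\Psi_A(\omega)$, so the long-term behaviour of solutions is controlled entirely by the monodromy matrix $\Phi_A(\omega)=\Psi_A(\omega)$ and the number $p=\omega^{-1}\ln\rho(\Psi_A(\omega))$ is exactly (minus the Lyapunov exponent associated with) the dominant Floquet multiplier $\lambda:=\rho(\Psi_A(\omega))=e^{p\omega}$.

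Next I would exploit the cooperative, irreducible structure of $A(t)$. By the standard comparison theorem (Kamke's theorem) for cooperative linear systems, $\Psi_A(t)$ is entrywise nonnegative for every $t\ge 0$; irreducibility then upgrades this to strict positivity of $\Psi_A(\omega)$, because the associated flow is strongly monotone. Applying the Perron--Frobenius theorem to the positive matrix $\Psi_A(\omega)$ produces a simple positive eigenvalue equal to $\lambda$ together with a strictly positive eigenvector $w\gg 0$ with
\[
\Psi_A(\omega)\,w = \lambda w = e^{p\omega}w.
\]

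I would then construct the claimed solution explicitly by setting
\[
x(t) := \Psi_A(t)\,w, \qquad v(t):=e^{-pt}x(t).
\]
Since $w\gg 0$ and $\Psi_A(t)$ is a (strongly) positive operator on $\mathbb{R}^k_+$, the vector $v(t)$ is positive for every $t$. Periodicity follows from the cocycle identity and the eigenvalue relation:
\[
v(t+\omega)=e^{-p(t+\omega)}\Psi_A(t)\Psi_A(\omega)w=e^{-pt}\Psi_A(t)\bigl(e^{-p\omega}\cdot e^{p\omega}w\bigr)=e^{-pt}\Psi_A(t)w=v(t).
\]
By construction $e^{pt}v(t)=x(t)=\Psi_A(t)w$ solves $x'=A(t)x$, finishing the proof.

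The main technical hurdle is the second step, namely promoting nonnegativity of $\Psi_A(\omega)$ to strict positivity so that Perron--Frobenius delivers an eigenvector $w$ with \emph{all} components strictly positive (rather than merely nonnegative, in which case $v(t)$ could vanish on a coordinate subspace). This is precisely where the irreducibility hypothesis on $A(t)$ is essential, and it is customary to invoke the Perron--Frobenius-type theorem for evolution operators of cooperative irreducible periodic systems (see e.g.\ the monotone dynamical systems framework used in \cite{zhang2007periodic}); the remaining algebraic manipulations are then routine.
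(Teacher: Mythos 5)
Your argument is correct and is essentially the standard proof of this result: the paper itself offers no proof, deferring entirely to Lemma 2.1 of \cite{zhang2007periodic}, and the proof given there is exactly your construction (strict positivity of $\Psi_A(\omega)$ from cooperativity plus irreducibility, the Perron--Frobenius eigenvector $w\gg 0$ for $\lambda=e^{p\omega}$, and $v(t)=e^{-pt}\Psi_A(t)w$ checked to be positive and $\omega$-periodic via the cocycle identity). The only quibble is terminological: $p$ is the principal Floquet \emph{exponent} and $\lambda$ the multiplier, not ``minus the Lyapunov exponent associated with'' the multiplier, but this does not affect the argument.
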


\begin{lemma}
Function $f(I)$ of model \eqref{ec1} satisfy that $f(I) \leq f'(0)I$, $ \forall I \geq 0 $. \label{lem2}
\end{lemma}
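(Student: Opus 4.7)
The plan is to reduce the inequality to a monotonicity statement about the ratio $f(I)/I$, which A3) essentially hands us.

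First, observe the case $I=0$ is trivial: by A2), $f(0)=0=f'(0)\cdot 0$. For $I>0$, define $g(I):=f(I)/I$, which is well-defined and continuously differentiable by A1) and A2). Compute
\[
g'(I) = \frac{I f'(I) - f(I)}{I^2} = -\frac{f(I)-If'(I)}{I^2} \leq 0
\]
by assumption A3). Hence $g$ is non-increasing on $(0,\infty)$.

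Next, I would extend $g$ continuously to $I=0$. Since $f(0)=0$ and $f$ is differentiable at $0$ with derivative $f'(0)$, we have
\[
\lim_{I \to 0^+} g(I) = \lim_{I\to 0^+} \frac{f(I)-f(0)}{I} = f'(0).
\]
Because $g$ is non-increasing on $(0,\infty)$, it satisfies $g(I) \leq \lim_{s\to 0^+} g(s) = f'(0)$ for every $I>0$. Multiplying by $I>0$ yields $f(I)\leq f'(0)I$, and combining with the trivial case $I=0$ gives the claim for all $I\geq 0$.

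There is really no obstacle here: the only subtlety is justifying that the ratio $f(I)/I$ extends continuously to the value $f'(0)$ at the origin, which is immediate from differentiability at zero together with $f(0)=0$. The assumptions A4) and A5) are not needed for this lemma; only A1)–A3) are used.
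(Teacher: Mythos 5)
Your proof is correct and follows essentially the same route as the paper: both show $f(I)/I$ is non-increasing via A3) and compare with its limit $f'(0)$ at the origin. Your write-up is in fact slightly more careful, explicitly handling the case $I=0$ and justifying the limit from differentiability, but the underlying argument is identical.
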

\begin{proof}
Using assumptions on function $f$ we have

\begin{equation}
\frac{d}{dI} \left( \frac{f(I)}{I} \right) = \frac{If'(I)-f(I)}{I^{2}} \leq 0,
\end{equation}
so function $f(I)/I$ decreases $ \forall I>0 $ and then $ \frac{f(I)}{I} \leq \lim_{I \rightarrow 0^{+} } \frac{f(I)}{I}=f'(0) $
\end{proof}

\begin{lemma}
Let $(S(t),E(t),I(t))$ a solution of system \eqref{ec2} with initial conditions $(S_0,E_0,I_0) \geq 0$, and $(\hat{S}(t),0,0)$ the infection free periodic solution of \eqref{ec2}, then 
\begin{equation}
\limsup_{t \rightarrow \infty } (S(t)-\hat{S}(t)) \leq 0.
\end{equation} \label{lem3}
\end{lemma}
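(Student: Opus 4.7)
The plan is a straightforward comparison argument. I would compare the $S$-equation with the linear equation satisfied by $\hat S$ by dropping the non-positive terms that distinguish them; the difference of the resulting upper solution and $\hat S$ then satisfies a homogeneous linear equation with exponentially contracting evolution.

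First I would use Theorem \ref{teo0} to note that $S(t), E(t), I(t) \geq 0$ and $f(I) \geq 0$ along the solution. Rewriting the first equation of \eqref{ec2} as
\[
\frac{dS}{dt} = \mu N(1-p) + \delta N - (\mu + r(t) + \delta) S - \beta(t) S f(I) - \delta E - \delta I,
\]
the non-negativity of $S$, $E$, $I$, $\beta$, $f(I)$ yields the differential inequality
\[
\frac{dS}{dt} \leq \mu N(1-p) + \delta N - (\mu + r(t) + \delta) S.
\]
Let $\tilde S(t)$ be the solution of the associated scalar linear ODE
\[
\frac{d\tilde S}{dt} = \mu N(1-p) + \delta N - (\mu + r(t) + \delta) \tilde S, \qquad \tilde S(0) = S(0).
\]
By the standard comparison principle for scalar ODEs, $S(t) \leq \tilde S(t)$ for all $t \geq 0$.

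Since $\hat S$ satisfies \eqref{ec12}, which is exactly the same inhomogeneous linear equation as the one defining $\tilde S$, the difference $w(t) := \tilde S(t) - \hat S(t)$ solves the homogeneous equation
\[
\frac{dw}{dt} = -(\mu + r(t) + \delta) w(t),
\]
so
\[
w(t) = \bigl(S(0) - \hat S(0)\bigr) \exp\!\left(-\int_0^t (\mu + r(s) + \delta)\, ds\right).
\]
Because $\mu + \delta > 0$ and $r(s) \geq 0$, the exponent tends to $-\infty$ as $t \to \infty$, whence $w(t) \to 0$. Combining this with $S(t) \leq \tilde S(t)$ gives
\[
\limsup_{t \to \infty} (S(t) - \hat S(t)) \leq \lim_{t \to \infty} (\tilde S(t) - \hat S(t)) = 0,
\]
as desired.

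There is no real obstacle here: the only thing to be careful about is to identify that the three extra terms $-\beta(t) S f(I)$, $-\delta E$, $-\delta I$ in the $S$-equation all have the correct sign, so that dropping them produces a genuine upper bound whose error relative to $\hat S$ relaxes exponentially. The argument does not require any use of $\mathcal{R}_0$ or of the assumptions A1)--A5) beyond non-negativity of $f(I)$, which is why it serves as a clean preliminary lemma for the subsequent global stability proof.
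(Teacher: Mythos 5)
Your proof is correct and follows essentially the same route as the paper: both drop the non-positive terms $-\beta(t)Sf(I)-\delta(E+I)$ to obtain the linear differential inequality whose periodic solution is $\hat S$, and conclude via exponential decay of the difference. The only cosmetic difference is that you interpose an auxiliary solution $\tilde S$ and compare twice, whereas the paper applies Gronwall's inequality directly to $X=S-\hat S$; the substance is identical.
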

\begin{proof}
Proof is similar to Lemma 4.1 of \cite{moneim2005use}. $S(t)$ satisfies first equation of system \eqref{ec2},then
\begin{align*}
\frac{dS}{dt} &= \mu N (1-p) - \beta(t)Sf(I)-(\mu+r(t))S+ \delta(N-S-E-I) \\
& \leq N(\mu(1-p)+ \delta) - ( \mu+r(t)+ \delta )S.
\end{align*}
Let $X(t)=S(t)-\hat{S}(t)$, then 
\begin{align*}
\frac{dX}{dt} &= (\mu+r(t)+ \delta)(\hat{S}-S) - \beta(t)Sf(I)- \delta(E+I) \\
&\leq -( \mu+ r(t)+ \delta ) X
\end{align*}
Using Gronwall's inequality $ X(t) \leq X(0) e^{- \int_0^{t}(\mu+r(s)+ \delta)ds},$ so
\begin{align*}
S(t)- \hat{S}(t)\leq& (S(0)- \hat{S}(0))  e^{- \int_0^{t}(\mu+r(s)+ \delta)ds} \\
&= (S(0)- \hat{S}(0)) e^{-(\mu+\delta)t} e^{\int_0^{t}r(s)ds}.
\end{align*}
Applying limit in both sides, we obtain $ \limsup_{t \rightarrow \infty} S(t)- \hat{S}(t) \leq 0 $. 
\end{proof}
Now, we are able to enunciate our theorem for global stability of infection free periodic solution.
\begin{theorem}
The infection free periodic solution $( \hat{S}(t),0,0 )$ of system \eqref{ec2} is globally asymptotically stable if $ \mathcal{R}_0<1 $. \label{teo2}
\end{theorem}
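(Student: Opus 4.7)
The plan is to establish attractivity of the disease free periodic solution and combine it with the local asymptotic stability already given by Theorem \ref{teo1}. Throughout I take a solution $(S(t),E(t),I(t))$ of \eqref{ec2} starting in $X$ and I show $(S(t),E(t),I(t)) \to (\hat{S}(t),0,0)$ as $t \to \infty$.

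First I would use Lemma \ref{lem3}: for any $\epsilon>0$ there exists $t_1>0$ with $S(t) \leq \hat{S}(t)+\epsilon$ for all $t \geq t_1$. Combining this with Lemma \ref{lem2} (so that $f(I) \leq f'(0) I$), for $t \geq t_1$ the $(E,I)$ subsystem satisfies
\begin{align*}
\frac{dE}{dt} &\leq \beta(t)\bigl(\hat{S}(t)+\epsilon\bigr) f'(0)\, I - (\mu+\sigma) E,\\
\frac{dI}{dt} &= \sigma E - (\mu+\gamma) I.
\end{align*}
I would then introduce the linear comparison system $y' = M_{\epsilon}(t)\, y$ with
\[
M_{\epsilon}(t)= \begin{pmatrix} -(\mu+\sigma) & \beta(t)(\hat{S}(t)+\epsilon) f'(0)\\ \sigma & -(\mu+\gamma) \end{pmatrix}= F_{\epsilon}(t)-V(t),
\]
where $F_{\epsilon}$ agrees with $F$ in \eqref{ec6} except $\hat{S}(t)$ is replaced by $\hat{S}(t)+\epsilon$. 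Since $M_{\epsilon}(t)$ is cooperative and irreducible, the standard comparison principle for such systems gives $(E(t),I(t)) \leq \bar{y}(t)$ componentwise for $t \geq t_1$, where $\bar{y}$ solves $\bar{y}' = M_{\epsilon}(t)\bar{y}$ with $\bar{y}(t_1)=(E(t_1),I(t_1))$.

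Next I would invoke Theorem 2.2 of \cite{wang2008threshold} to translate $\mathcal{R}_0<1$ into $\rho(\Phi_{F-V}(LT))<1$. By continuous dependence of the fundamental matrix on the coefficients and continuity of the spectral radius, there exists $\epsilon_0>0$ such that $\rho(\Phi_{M_{\epsilon_0}}(LT))<1$. Fixing such an $\epsilon_0$, Lemma \ref{lem1} applied to $A=M_{\epsilon_0}$ with period $LT$ produces a positive $LT$-periodic $v(t)$ such that $\bar{y}(t)=e^{pt}v(t)$ with $p=\tfrac{1}{LT}\ln\rho(\Phi_{M_{\epsilon_0}}(LT))<0$. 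Hence $\bar{y}(t)\to 0$, and the comparison forces $E(t),I(t) \to 0$.

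Finally, I would feed $E,I \to 0$ back into the $S$-equation of \eqref{ec2}. Setting $X(t)=S(t)-\hat{S}(t)$ and subtracting \eqref{ec12} gives
\[
X'(t)= -(\mu+r(t)+\delta)\,X(t) + h(t),\qquad h(t):=-\beta(t)S(t)f(I(t))-\delta(E(t)+I(t)),
\]
and since $f(0)=0$ together with the boundedness of $S$ yields $h(t)\to 0$, a variation of parameters estimate using the uniform lower bound $\mu+r(t)+\delta\geq \mu+\delta>0$ on the decay rate forces $X(t)\to 0$. Attractivity combined with Theorem \ref{teo1} yields global asymptotic stability.

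The most delicate step is the perturbation argument: converting $\rho(\Phi_{F-V}(LT))<1$ into $\rho(\Phi_{F_{\epsilon}-V}(LT))<1$ for some small $\epsilon>0$. This relies on continuous dependence of the monodromy matrix on parameters and continuity of eigenvalues, which is standard but must be stated carefully. The rest is bookkeeping via the comparison principle and Floquet representation.
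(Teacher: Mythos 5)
Your proposal is correct and, for the core of the argument (showing $E,I\to 0$), it coincides with the paper's proof: same use of Lemma \ref{lem3} and Lemma \ref{lem2} to dominate the $(E,I)$ subsystem by the cooperative linear system with matrix $F-V+\epsilon H$ (your $M_\epsilon=F_\epsilon-V$ is exactly that matrix), same appeal to Theorem 2.2 of \cite{wang2008threshold} plus continuity of the spectral radius to get $\rho(\Phi_{F-V+\epsilon H}(LT))<1$ for small $\epsilon$, and same use of Lemma \ref{lem1} and the comparison principle. One small imprecision: Lemma \ref{lem1} supplies \emph{a} solution of the form $e^{pt}v(t)$, not the particular solution $\bar y$ with your initial data; you should either dominate $(E(t_1),I(t_1))$ by a multiple of $v(0)$ (as the paper does) or simply invoke Floquet theory to say all solutions decay when $p<0$. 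Where you genuinely diverge is the last step: the paper deduces $S\to\hat S$ indirectly, by estimating $R-\hat R$ via a differential inequality, showing $\limsup(R-\hat R)\le 0$, and then recovering $\liminf(S-\hat S)\ge 0$ from $S=N-E-I-R$, which it combines with Lemma \ref{lem3}. You instead write $X=S-\hat S$, obtain $X'=-(\mu+r(t)+\delta)X+h(t)$ with $h\to 0$, and conclude by variation of parameters with the uniform decay rate $\mu+\delta>0$. Your route is more direct and handles both the upper and lower estimates at once, at the cost of needing the standard (but easily verified) fact that a uniformly exponentially stable scalar linear equation with a vanishing forcing term has all solutions tending to zero; both arguments are valid.
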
 
\begin{proof}
From theorem \eqref{teo1} we have $ (\hat{S}(t),0,0)$ is unstable for $ \mathcal{R}_0>1 $ and asymptotically stable for $ \mathcal{R}_0<1 $, so it is sufficient to prove that any solution $(S(t),E(t),I(t))$ with non-negative initial conditions $(S_0,E_0,I_0)$ approaches to $( \hat{S},0,0 )$.\\
 Let $ \epsilon>0 $, from Lemma \eqref{lem3} we have
$$ \limsup_{t \rightarrow \infty} (S(t)- \hat{S}(t))= \lim_{t \rightarrow \infty}  \sup_{\tau \geq t}\left( S(\tau)- \hat{S}(\tau) \right)= L \leq 0, $$
so there exist a $N>0$ such that for all $t_1 >N$
$$  - \epsilon< \sup_{t \geq t_1}\left( S(t)- \hat{S}(t) \right)-L < \epsilon, $$ 
this implies that $ \sup_{t \geq t_1}( S(t)- \hat{S}(t))< \epsilon+L \leq \epsilon $. Then, from definition of supremum we have for all $t>t_1$
\begin{align*}
S(t)- \hat{S}(t) \leq \sup_{t \geq t_1}( S(t)- \hat{S}(t))< \epsilon.
\end{align*}
Then, we have proved that for all $\epsilon>0$ we can find a $t_1>0$ such that $S(t)< \epsilon + \hat{S}(t)$ for all $t>t_1$. \par
 
Now, using lemma \eqref{lem2} , for $\epsilon>0$ we can find a $t_1>0$ such that for $t>t_1$ 
\begin{align}
\frac{dE}{dt} &= \beta(t)Sf(I)- ( \mu + \sigma )E \nonumber, \\
& \leq \beta(t)S(t)f'(0)I- ( \mu + \sigma )E(t) \\
&< \beta(t)f'(0)(\hat{S}(t)+ \epsilon)I(t)- ( \mu + \sigma )E(t). \label{ec8}
\end{align}
We consider the following  perturbated sub-system:
\begin{align}
\frac{d \bar{E} }{dt} &= \beta(t)f'(0)(\hat{S}+ \epsilon)\bar{I}- ( \mu + \sigma )\bar{E} , \nonumber \\
\frac{d \bar{I}}{dt} &= \sigma \bar{E} - ( \mu + \gamma )\bar{I}, \label{ec7}
\end{align}

which can be rewritten as $$ \left(\frac{d \bar{E}}{dt}, \frac{d \bar{I}}{dt} \right)^{T} = (F(t)-V(t))(\bar{E}, \bar{I})^{T} + \epsilon H(t) (\bar{E}, \bar{I})^{T},  $$
with $F(t),V(t)$ defined in \eqref{ec6} and 
\begin{equation}
H(t) = \left( \begin{matrix}
0 & \beta(t)f'(0) \\0 &0 
\end{matrix} \right)  . \label{ec15}
\end{equation}
Matrix $(F-V+ \epsilon H)(t)$ is  LT-periodic, cooperative, irreducible and continuous. Using lemma  \eqref{lem1}, if $q=  \frac{1}{LT} \ln \rho ( \Psi_{F-V+ \epsilon H} (LT) ) $ then there exist a positive and LT-periodic function $v(t)=(v_1(t),v_2(t))^{T}$ such that $e^{qt}v(t)$ is solution of system \eqref{ec7}. Note that for all $k>0$, function $ k e^{q(t-t_i)} v(t-t_i) $ is also a solution of system \eqref{ec7} with initial condition $kv(0)$ at $ t=t_i $ .\par 
Choose a $ \bar{t}>t_1 $ and $ \alpha_1>0$ such that $(E(\bar{t}),I(\bar{t}))^{T} \leq \alpha_1v(0)$, then from \eqref{ec8}
$$ \left( \frac{dE}{dt}, \frac{dI}{dt} \right)^{T} \leq (F-V)(E,I)^{T}+ \epsilon H(E,I)^{T}, $$

and using comparison principle (see for instance \cite{smith1995theory} theorem B.1), we have $ (E(t),I(t))^{T} \leq \alpha_1 e^{q(t- \bar{t})}v(t- \bar{t}) $ for all $t > \bar{t}$. \par 
From theorem 2.2 of \cite{wang2008threshold}, $ \mathcal{R}_0<1 $ iff $\rho ( \Phi_{F-V}(LT)<1$. By the continuity of the spectrum for matrices (see \cite{kato2013perturbation}, Section II.5.8 ) we can choose $ \epsilon>0 $ small enough that $\rho ( \Phi_{F-V+ \epsilon H} (LT)<1$ and then $q<0$ (see note \eqref{not1} ). So, using positivity of solutions and comparison: 
$$ 0 \leq \lim_{t \rightarrow \infty} E(t) \leq \lim_{t \rightarrow \infty} \alpha_1 e^{q(t- \bar{t})}v_1(t- \bar{t})=0. $$
And similarly for I. We obtain 
\begin{align}
 \lim_{t \rightarrow \infty} E(t)&=0 \nonumber\\
 \lim_{t\rightarrow \infty } I(t)&=0. \label{ec9}
\end{align} 

We need only prove that $S(t)$ approaches to $ \hat{S} $. At infection free solution $ \hat{R}(t)= N- \hat{S}(t),$ where $\hat{R}$ satisfies equation
\begin{equation}
\frac{d \hat{R} }{dt} = \mu Np+r(t) \hat{S}-(\mu+ \delta)\hat{R}.
\end{equation}
So $R(t)=N-S(t)-E(t)-I(t)$ satisfies 
\begin{equation}
\frac{d(R- \hat{R} )}{dt} = r(t)(S-\hat{S})+ \gamma I-(\mu+ \delta)(R- \hat{R}).
\end{equation}
Let $ \epsilon_1>0 $ arbitrary and $r_{max}= \max_{u \in [0,LT]} r(u)$. Due to \eqref{ec9} we can find a $t_2>0$ such that $I(t) < \epsilon_1 $ for $t>t_2$, moreover we can find a $t_3>0$ such that $S(t) \leq \hat{S}(t)+ \epsilon_1$ for $t>t_3$. Then, let $t_4= \max \{t_2, t_3 \}
$, we have for $t>t_4$
$$ \frac{d(R- \hat{R})}{dt} \leq (r_{max}+ \gamma) \epsilon_1-(\mu+ \delta)(R- \hat{R}). $$
Multiplying in both sides by $e^{(\mu+ \delta)}t$ and integrating from $t_4$ to $t$ we obtain
\begin{equation}
(R- \hat{R}) \leq (R- \hat{R})(t_4)e^{-(\mu+ \delta)(t-t_4)} + \frac{\epsilon_1 ( r_{max} + \gamma )}{\mu+ \delta} (1-e^{- (\mu+ \delta)(t-t_4)}).
\end{equation}
So, $ \limsup_{t \rightarrow \infty }(R- \hat{R})(t) \leq \frac{\epsilon_1 ( r_{max} + \gamma )}{\mu+ \delta} $, where $\frac{\epsilon_1 ( r_{max} + \gamma )}{\mu+ \delta}$ is arbitrarily small. Then $ \limsup_{t \rightarrow \infty} (R- \hat{R})(t) \leq 0 $ and using similar arguments to used for $S$ for $ \epsilon_3>0 $ we can find a $t_5>0$ with $R(t) \leq \hat{R}(t) + \frac{\epsilon_3}{2} $ for $t>t_5$. Also, from \eqref{ec9} we can find $t_6>0$ with $E(t)+I(t)<  \frac{\epsilon_3}{2} $ for $t>t_6$, so for $t> \max \{ t_5,t_6 \}$ we have 

\begin{align*}
S(t)&= N-E(t)-I(t)-R(t) \\
& \geq N- \hat{R}(t)- \epsilon_2 = \hat{S}(t)- \epsilon_2.
\end{align*}
Or equivalently, $S(t)- \hat{S}(t) \geq - \epsilon_2$, with $ \epsilon $ arbitrarily small and this implies that $ \liminf_{t \rightarrow \infty} (S- \hat{S})(t) \geq 0 $. We conclude by comparison and using lemma \eqref{lem3} that $ \lim_{t \rightarrow \infty} S(t)= \hat{S}(t) $ completing the proof.

\end{proof}

Theorem \eqref{teo2} shows that disease will completely die as long as $ \mathcal{R}_0<1 $. So, reducing and keeping $\mathcal{R}_0$ below the unity would be sufficient to eradicate infection, even in a periodic environment and a general incidence rate \par

\subsection{Disease persistence}

Uniform persistence is an important concept in population dynamics, since
it characterizes the long-term survival of some or all interacting species in an
ecosystem \cite{zhao2013dynamical}.  \par 
In this section we consider the dynamics of the periodic model when $\mathcal{R}_0>1$. We will show that actually, $ \mathcal{R}_0 $ is a threshold parameter for the extinction and the uniform persistence of the disease. The results are inspired by  \cite{bai2012global}, \cite{posny2014modelling}, \cite{yang2015global} and \cite{zhang2007periodic}. \par

Let $P:X \rightarrow X$ be the Poincar\'e map associated with system \eqref{ec2}, that is 
$$P(x_0)= \phi(LT,x_0), \quad \forall x_0 \in X,$$
where $X$ is defined in \eqref{ec10} and $\phi(t,x_0)$ is the unique solution of system \eqref{ec2} with $\phi(0,x_0)=x_0$. We define the following sets:
$$ X_0:= \{ (S,E,I) \in X : E >0, I>0 \}, \quad \partial X_0 := X \backslash X_0. $$
Note that $\partial X_0$ is not the boundary of $X_0$, but it is a standard notation of persistence theory. \par 

\begin{lemma}
The set $X_0$ is positively invariant under system \eqref{ec2}. \label{lem4}
\end{lemma}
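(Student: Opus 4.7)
The plan is a direct differential-inequality argument. Take an initial condition $(S_0,E_0,I_0)\in X_0$, so $E_0>0$ and $I_0>0$, and let $(S(t),E(t),I(t))$ be the corresponding solution of \eqref{ec2}. Membership in $X$ (i.e.\ $S+E+I\le N$ and all components non-negative) is immediate from Theorem \ref{teo0} together with the constraint $S+E+I+R=N$, so the only substantive thing to check is that $E(t)>0$ and $I(t)>0$ persist for every $t>0$.

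Next, I would read off two one-sided estimates directly from the $E$- and $I$-equations. Because Theorem \ref{teo0} ensures $S(t)\ge 0$, and assumptions A1--A2 imply $f(I)\ge 0$ whenever $I\ge 0$, the term $\beta(t)Sf(I)$ in the second equation is non-negative, giving
\begin{equation*}
\frac{dE}{dt}=\beta(t)Sf(I)-(\mu+\sigma)E\;\ge\;-(\mu+\sigma)E.
\end{equation*}
Likewise, since $E(t)\ge 0$,
\begin{equation*}
\frac{dI}{dt}=\sigma E-(\mu+\gamma)I\;\ge\;-(\mu+\gamma)I.
\end{equation*}

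Applying Gronwall's inequality to each of these yields
\begin{equation*}
E(t)\;\ge\;E_0\,e^{-(\mu+\sigma)t},\qquad I(t)\;\ge\;I_0\,e^{-(\mu+\gamma)t},
\end{equation*}
for all $t\ge 0$. Since $E_0>0$ and $I_0>0$, both exponentials are strictly positive, so $E(t)>0$ and $I(t)>0$ for every $t\ge 0$, which is exactly the condition defining $X_0$. Hence $(S(t),E(t),I(t))\in X_0$ for all $t\ge 0$, proving positive invariance.

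There is essentially no hard step here; the only subtlety is making sure the lower bounds on $E$ and $I$ are justified, which in turn needs the non-negativity of $S$ and the sign condition $f(I)\ge 0$ supplied by Theorem \ref{teo0} and assumption A2 respectively. If anything, the mildly tricky point to mention explicitly is that we are coupling the two inequalities sequentially (first $E$, then $I$), rather than trying to treat $(E,I)$ as a cooperative planar system; this avoids invoking any comparison theorem beyond scalar Gronwall.
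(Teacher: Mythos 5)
Your proposal is correct and follows essentially the same route as the paper: both derive the scalar inequalities $\frac{dE}{dt}\ge -(\mu+\sigma)E$ and $\frac{dI}{dt}\ge -(\mu+\gamma)I$ from non-negativity of $S$ and $f$, then apply a scalar comparison (Gronwall) to conclude $E(t)\ge E_0e^{-(\mu+\sigma)t}>0$ and $I(t)\ge I_0e^{-(\mu+\gamma)t}>0$. Your explicit remark about treating the two inequalities separately rather than as a cooperative system is a nice clarification, but the substance matches the paper's argument exactly.
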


\begin{proof}
Let $x_0=(S_0,E_0,I_0) \in X_0$, ie $E_0>0, I_0>0$ and $\phi(t,x_0)=(S(t),E(t),I(t))$ the solution of \eqref{ec2} with $\phi(0,x_0)=x_0$. Due to non negativity of solutions and assumptions on  function $ \beta(t) $ and $f(I)$, we have:
$$ \dfrac{dE}{dt} = \beta(t) S f(I) - (\mu + \sigma) E \geq  - (\mu + \sigma) E, \quad \forall t>0. $$
Using comparison theorem (see for instance \cite{smith1995theory} Appendix B.1) we have for all $t>0$:
$$ E (t) \geq K e^{-(\mu + \sigma)t}>0, \quad \text{with } \quad K=E(0)>0. $$
Similarly,
$$ \dfrac{dI}{dt} = \sigma E- (\mu + \gamma) I \geq -(\mu + \gamma)I, $$
so,
$$ I(t) \geq I(0) e^{-(\mu + \gamma)}t>0, \quad \forall t>0.$$
Therefore, $\phi(t,x_0)$ remains on $X_0$ for all $t>0$.
\end{proof}
Set:
 $$ M_\partial := \{ (S_0,E_0,I_0) \in \partial X_0 : P^{m} (S_0,E_0,I_0) \in \partial X_0 , \forall m \geq 0\} .$$
 To use persistence theory developed in \cite{zhao2013dynamical}, we show that
\begin{equation}
M _\partial = \{ (S,0,0): S \geq 0\}. \label{ec11}
\end{equation} 
 Let $x_0=(S_0,0,0) \in X$ and $(S(t),E(t),I(t))$ the solution that passes trough that initial condition. We have that $\phi(t,x_0)=(S_1(t),0,0)$, with $S_1(t)$  solution of  \eqref{ec12} and $S_1(0)=S_0$ is a solution that satisfies the initial condition.  By uniqueness of solutions we have $E(t)=0=I(t)$ $\forall t \geq 0$, so $x_0$ lives on $M_\partial$. \par 
 Now, if $x_0 \in M_\partial $ we want $x_0 =(S_0,0,0)$. We prove an equivalent sentence: if $x_0 \in \partial X_0 \backslash \{ (S,0,0) : S \geq 0 \} $ then it does not belong to $M_\partial$.  Consider an initial point $x_0=(S_0,E_0,I_0) \in \partial X_0 \backslash \{ (S,0,0) : S \geq 0 \} $, then $E_0>0, I_0=0$ or $E_0=0, I_0>0$. Suppose $E>0$ and $I_0=0$, then $ \phi(t,x_0)$ holds 
 $$ \frac{dI}{dt}(0)= \sigma E(0)>0. $$
 By continuity of $E(t)$ and sign of derivative of $I$, we have that for small $0<t<<1$, $E(t)>0, I(t)>0$, so, for  $0<t<<1$, $\phi(t,x_0) \in X_0 $. Using invariance of $X_0$ ( Lemma \eqref{lem4} ) we have $\phi(t,x_0) \in X_0$ for all $t>1$. Finally, for an $m>0$ such that $mLT>1$ we have $P^{m}(x_0)=\phi(mLT,x_0) \in X_0$ and this implies  \eqref{ec11}. By discussion in section 2 is clear that there is one fixed point of $P$ in $M_\partial$: $M_0( \hat{S}(0),0,0 )$ ( \cite{nakata2010global} ).   \par 
 Now, we are in position to introduce the following result of uniform  persistence of the disease. 
\begin{theorem}
Let $ \mathcal{R}_0>1, $ then there exists an $ \epsilon>0 $ such that any solution $(S(t),E(t)I(t))$ of \eqref{ec2} with initial values $(S(0),E(0),I(0))\in X_0$ satisfies
\begin{equation}
\liminf_{t \rightarrow \infty} E(t) \geq \epsilon, \quad \liminf_{t \rightarrow \infty} I(t) \geq \epsilon,
\end{equation}
 \label{teo3}
\end{theorem}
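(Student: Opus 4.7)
The plan is to apply the abstract uniform persistence theorem of Zhao (Theorem 1.3.1 of \cite{zhao2013dynamical}) to the Poincar\'e map $P$ on the compact forward-invariant set $X$, relative to the decomposition $(X_0, \partial X_0)$. Lemma \ref{lem4} already gives positive invariance of $X_0$; the identification $M_\partial = \{(S,0,0) : S \geq 0\}$ together with the analysis of \eqref{ec12} shows that the only fixed point of $P$ in $M_\partial$ is $M_0 = (\hat{S}(0),0,0)$ and that every trajectory on $\partial X_0$ is attracted to it. Compactness of $X$ yields point dissipativity and a compact global attractor, and $\{M_0\}$ is trivially an acyclic covering of the omega-limit set in $\partial X_0$. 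The only nontrivial hypothesis left to verify is the uniform weak repeller property: that there exists $\delta > 0$ with $\limsup_{m\to\infty} \|P^m(x_0) - M_0\| \geq \delta$ for every $x_0 \in X_0$.

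I would prove the weak repelling property by contradiction. If it failed, then for any prescribed $\eta > 0$ one could find $x_0 \in X_0$ and $t_0 > 0$ with $|S(t) - \hat{S}(t)| < \eta$ and $0 < E(t), I(t) < \eta$ for all $t \geq t_0$. Assumption A5 gives the one-sided linearization
\[
f(I) \geq f'(0) I + \tfrac{1}{2} f''(0) I^{2} \geq \bigl( f'(0) + \tfrac{1}{2} f''(0) \eta \bigr) I
\]
for $I < \min(\eta, \epsilon^{*})$, which, combined with $S(t) \geq \hat{S}(t) - \eta$, converts the $(E,I)$-subsystem into a cooperative differential inequality
\[
\begin{pmatrix} E \\ I \end{pmatrix}'(t) \geq \bigl( F - V - \eta\, G_\eta \bigr)(t) \begin{pmatrix} E \\ I \end{pmatrix}(t),
\]
where $G_\eta(t)$ is an LT-periodic matrix with entries uniformly bounded as $\eta \to 0$ measuring the deviation from the linearization at $M_0$. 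Note that A5 is crucial here and was unused in the extinction direction; it plays the role dual to the upper bound of Lemma \ref{lem2} used in Theorem \ref{teo2}.

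Finally I would invoke $\mathcal{R}_0 > 1$, which by Theorem 2.2 of \cite{wang2008threshold} is equivalent to $\rho(\Phi_{F-V}(LT)) > 1$. By continuity of the spectrum (\cite{kato2013perturbation}, Section II.5.8), $\eta$ can be chosen small enough that $\rho(\Phi_{F - V - \eta G_\eta}(LT)) > 1$, so that the Floquet exponent $q := \frac{1}{LT}\ln\rho(\Phi_{F - V - \eta G_\eta}(LT))$ is strictly positive. Lemma \ref{lem1} then produces a positive LT-periodic $v(t)$ with $e^{qt}v(t)$ solving the perturbed linear system. Choosing $\bar t \geq t_0$ and $\alpha > 0$ small enough that $(E(\bar t), I(\bar t))^{T} \geq \alpha\, v(0)$ (possible because $E,I > 0$ on $X_0$ by Lemma \ref{lem4}), the comparison principle for cooperative systems (\cite{smith1995theory}, Theorem B.1) gives $(E(t), I(t))^{T} \geq \alpha\, e^{q(t - \bar t)} v(t - \bar t)$ for $t \geq \bar t$, which blows up and contradicts $E(t), I(t) < \eta$. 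The main obstacle I anticipate is making the perturbation argument truly uniform, i.e.\ controlling $\|\eta G_\eta\|$ well enough that continuity of the spectrum kicks in before $\eta$ has to be shrunk again; this is exactly where A5 is essential. Once weak repelling is in hand, Zhao's theorem gives discrete uniform persistence $\liminf_{m\to\infty} d(P^m x_0, \partial X_0) \geq \epsilon_0 > 0$ for every $x_0 \in X_0$, and the uniform continuity of the semiflow on $[0, LT] \times X$ upgrades this to the continuous-time bounds $\liminf_{t\to\infty} E(t), I(t) \geq \epsilon$ claimed in the theorem.
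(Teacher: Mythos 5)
Your proposal is correct and follows essentially the same route as the paper: uniform persistence theory for the Poincar\'e map with the weak-repeller property established by contradiction, the lower linearization of the $(E,I)$-subsystem via A4--A5, continuity of the spectral radius to preserve $\rho>1$ under the small perturbation, and Lemma \ref{lem1} together with the comparison principle to force blow-up of $(E,I)$. The only (harmless) difference is that you extract the bound $S(t)\geq \hat{S}(t)-\eta$ directly from the orbit's closeness to the disease-free periodic orbit, whereas the paper routes it through the auxiliary perturbed scalar equation \eqref{ec13} and its globally attractive periodic solution.
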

 \begin{proof}
 We first prove that $P$ is uniformly persistent (see definition 1.3.2 from \cite{zhao2013dynamical} ) with respect to $(X_0, \partial X_0)$, because this implies that the solution of \eqref{ec2} is uniformly persistent with respect to $(X_0, \partial X_0)$ (  \cite{zhao2013dynamical}, theorem 3.1.1 ). Clearly, $X_0$ is relatively open in $X$, so $ \partial X_0 $ is relatively closed.
  
 Define $$W^{s}:=\{ x_0 \in X_0 : \lim_{m \rightarrow \infty} \Vert P^{ m}(x_0)-M_0 \Vert=0 \},$$
 we show that $W^{s}(M_0) \cap X_0= \emptyset.  $\par 
 By theorem 2.2 of \cite{wang2008threshold}, $ \mathcal{R}_0>1 $ iff $ r(\Psi_{F-V}(LT))>1 $. Choose an $ \eta>0 $ small enough with the property $ \hat{S}(t)- \eta>0, \forall t>0 $ (see appendix \eqref{apb} ). 
 For $\alpha>0$, let us consider the following perturbed equation:
\begin{equation}
\dfrac{d\bar{S}}{dt} = N(\mu(1-p)+ \delta)- 2 \delta \alpha - ( \beta(t)f'(0) \alpha+ \mu + r(t)+ \delta ) \bar{S}. \label{ec13}
\end{equation}
System above admits a unique positive $LT-$periodic solution of the form:
\begin{align}
\hat{S}(t, \alpha) = &e^{-\int_0^{t}( \beta(s) f'(0)\alpha+ \mu + r(s)+ \delta )ds} \\
& \left( \hat{S}(0, \alpha)+ (N\mu(1-p)+N \delta - 2 \delta  \alpha) \int_0^{t} e^{\int_0^{s} ( \beta(\xi) f'(0)\alpha+ \mu + r(\xi)+ \delta )d \xi} ds \right) \label{ec22}
\end{align}

whith $ \hat{S}(t, 0)= \hat{S}(t) $ and which is globally attractive in $ \mathcal{R}_+ $ (see appendix \eqref{apd} ) with 
\begin{align}
\hat{S}(0, \alpha)= \dfrac{ \left(N \mu (1-p)+N \delta - 2 \delta \alpha \right) \int_0^{LT} e^{\int_0^{s}( \beta (\xi) f'(0) \alpha + \mu + r(\xi) + \delta ) d \xi} ds}{e^{\int_0^{LT} ( \beta(s) f'(0) \alpha + \mu + r(s)+ \delta )ds }-1}. \label{ec23}
\end{align}
Since $\hat{S}(0, \alpha)$ is continuous in $\alpha$, then for all $\epsilon>0 $ there is a $\delta>0$ such that for $|\alpha|< \delta$ we have $ |\hat{S}(0, \alpha)- \hat{S}(0,0)| < \epsilon $. Moreover, by continuity of solutions with respect to initial values we can find for all $\bar{\eta}>0$ an $\bar{\epsilon}>0$ such that if $|\hat{S}(0, \alpha)- \hat{S}(0,0)| < \bar{\epsilon}$ then   
$$ |\hat{S}(t, \alpha)- \hat{S}(0,0)|< \bar{\eta}. $$
Therefore, for $\eta$ established before, we can find $\alpha$ small enough such that $ \hat{S}(t, \alpha)> \hat{S}(t)- \eta $, $\forall t>0$. \par 
 Again, by continuity of solutions with respect to initial values, for this small $\alpha>0$, there exists a $ \delta>0 $ such that for all $ (S_0,E_0,I_0) \in X_0 $ with $ \Vert  (S_0,E_0,I_0) -M_0 \Vert\leq \delta  $ then $ \Vert \phi(t,(S_0,E_0,I_0))- \phi(t, M_i) \Vert< \alpha $, $\forall t \in [0,LT]$. \par 
We now claim that 
\begin{equation}
\limsup_{m \rightarrow \infty} \Vert P^{m}(S_0,E_0,I_0)-M_0 \Vert\geq \delta, \quad \forall (S_0,E_0,I_0)\in X_0.
\end{equation}
By contradiction, suppose that  
 
\begin{equation}
\limsup_{m \rightarrow \infty} \Vert P^{m}(S_0,E_0,I_0)-M_i \Vert< \delta, \quad \text{for some} \quad (S_0,E_0,I_0)\in X_0, \quad \text{and} \quad i=1,2.
\end{equation}
Without loss of generality, we can assume that $\Vert P^{m}(S_0,E_0,I_0)-M_0 \Vert< \delta  $ for all $m \geq 0$ (see appendix \eqref{apa}). From above discussion, $ \Vert  \phi(t, P^{m}(S_0,E_0,I_0) )- \phi(t,M_0) \Vert < \alpha  $, $ \forall m \geq 0 $ and $t \in [0,LT]$. \par 
For any $t\geq 0,$ let $t=mLT+t_1$, where $t_1 \in [0,LT)$ and $m=[ \frac{t}{LT} ]$ is the greatest integer less than or equal to $ \frac{t}{LT} $. Then, we get
$$ \phi(t,(S_0,E_0,I_0))- \phi(t,M_0)= \phi(t_1,P^{m}(S_0,E_0,I_0))- \phi(t,M_0)< \alpha. $$    
If we set $\phi(t,(S_0,E_0,I_0))=(S(t),E(t),I(t))$ , then we have $ E(t) \leq \alpha, I(t)\leq \alpha $, $ \forall t \geq 0 $, and from first equation of \eqref{ec2} and lemma \eqref{lem2} we arrive to:

\begin{align}
\dfrac{d S}{dt} \geq  N(\mu(1-p)+ \delta)- 2 \delta \alpha - ( \beta(t)f'(0) \alpha+ \mu + r(t)+ \delta ) \bar{S}. \label{ec14}
\end{align}
Which is exactly the equation in \eqref{ec13}.  Since the unique periodic solution of \eqref{ec13} is globally attractive in $ \mathcal{R}_+ $, we have for $\bar{S}(t, \alpha)$ solution of \eqref{ec13} that $ \lim_{t \rightarrow \infty} \bar{S}(t, \alpha) = \hat{S}(t, \alpha) $. So for $\eta$ given before there exists $T>0$ such that for all $t \geq T$

$$ \vert \bar{S}(t, \alpha)- \hat{S}(t, \alpha) \vert < \eta,$$
or equivalently $\bar{S}(t, \alpha) >\hat{S}(t, \alpha)- \eta$. Moreover, from previous analysis, $ \hat{S}(t, \alpha)- \eta > \hat{S}(t)- \eta $, therefore, using comparison principle on \eqref{ec14} we arrive to 
\begin{equation}
S(t) \geq  \hat{S}(t)- \eta.
\end{equation} 
for $t>T$. \par 
Due to $E(t), I(t)\leq \alpha $, and  $ \alpha $ is fixed small, we can take $\alpha< \epsilon^{*}$ and use  assumption (A5) in introduction and hence (see appendix \eqref{apc} )

\begin{equation}
 \left( \begin{matrix}
\frac{dE}{dt} \\ \frac{dI}{dt}
\end{matrix} \right) \geq (F-V - \eta H- \alpha K)(E,I)^{T} ,  \label{ec16}
\end{equation}

where $F,V,$ are defined in \eqref{ec6}, $H$ is defined as \eqref{ec15} and 
$$ K= \left( \begin{matrix}
0 & - \frac{1}{2} \beta(t) f''(0)[ \hat{S}- \eta ]\\ 0&0 
\end{matrix} \right) .$$

By theorem 2.2 of \cite{wang2008threshold}, we have $\mathcal{R}_0>1$ iff $ \rho(\Phi_{F-V}(LT))>1 $. By continuity of spectrum (see \cite{kato2013perturbation} Section II ) we can find $\alpha, \epsilon$ such that $$ \rho (\Phi_{F-V-\eta H- \alpha K})>1 .$$  Consider the auxiliar system 
\begin{equation*}
 \left( \begin{matrix}
\frac{dE_2}{dt} \\ \frac{dI_2}{dt}
\end{matrix} \right) = (F-V - \eta H- \alpha K)(E_2,I_2)^{T} ,
\end{equation*}
then, using lemma \eqref{lem1} there exist a solution of \eqref{ec16} with the form $e^{p_2 t}v_2(t)$, with $p_2= \frac{1}{LT} \ln ( \rho (\Phi_{F-V- \eta H- \alpha K}(LT)) )>0 $. Choose a $t_{2}> T$, and a small number $\alpha_2>0$ such that $(E_2(t_2),I_2(t_2))^{T} \geq \alpha_2 v_2(0)$. Using comparison principle we get $(E(t),I(t)) \geq \alpha_2 v_2(t- t_2)e^{p_2(t-t_2)},$ which implies $E(t) \rightarrow \infty$ and $I(t) \rightarrow \infty$. This leads a contradiction. \par
The above claim shows that $P$ is weakly uniformly persistent with respect to $(X_0, \partial  X_0). $ Note that $P$ has a global attractor $\hat{S}(0)$ ( \eqref{lem3} ). It follows that $M_0$ is an isolated invariant set in $X$, $W^{s}(M_0) \cap X_0= \emptyset$. Every orbit in $M_\partial$ converges to $M_0$ and $M_0$ is acyclic. By the aciclity theorem on uniform persistence for maps( \cite{zhao2013dynamical} Theorem 1.3.1 and Remark 1.3.1 )is follows that $P$ is uniformly persistent with respect to $(X_0, \partial X_0)$, that is, there exists $ \epsilon>0 $ such that any solution of \eqref{ec2} satisfies $\lim_{t \rightarrow \infty} E(t)\geq \epsilon, $ $\lim_{t \rightarrow \infty} I(t)\geq \epsilon. $
 
 \end{proof}

\section{Numerical simulations}

In this section we provide some numerical simulations to illustrate  the results obtained in our theorems and compare with previous results. \par 
To improve previous models used in references, we use a particular function 

\begin{equation} \label{ec17}
f(I)= \frac{I}{1+aI}, \quad a\geq0,
\end{equation} 
 
 which includes the case $f(I)=I$ used in \cite{bai2012global}. One can check that function \eqref{ec17} satisfies conditions A1)-A5). Using this function, system \eqref{ec2} is rewritten as:
\begin{equation} 
\begin{aligned}
\frac{dS}{dt} &= \mu N(1-p)- \frac{\beta(t)S I}{1+aI} -(\mu + r(t))S+ \delta(N-S-E-I), \\
\frac{dE}{dt} &= \frac{\beta(t)S I}{1+aI}- ( \mu + \sigma )E, \\
\frac{dI}{dt} &= \sigma E - ( \mu + \gamma )I. \label{ec18}
\end{aligned}
\end{equation}
 
Set an initial population $N=2,200,000$ and take time $t$ in years. Suppose $\mu= 0.02$ per year, corresponding to an average human life time of 50 years. Following \cite{bai2012global} take the parameters as follows: $ \sigma=38.5  $ per year, $ \gamma= 100$ per year, $p=0.85, \delta=0$. Choose the periodic transmission as $\beta(t)=\beta_0+0.0002 \cos(2 \pi t)$, with $\beta_0$ the transmission parameter, and the periodic vaccination rate $r(t)=0.1+0.004 \cos(2 \pi t)$. Both functions have period $LT=1$. 

There exists multiple methods for computing the basic reproduction number, via numerical approximations, or finding a positive solution of the equation $ \rho(W(LT,0, \lambda))=1 $ (see theorem 2.1 of \cite{wang2008threshold} ). 
In order to compare with previous works, we firstly approximate the basic reproduction number with its average value $\mathcal{R}_0^{T}$ (used by several authors as \cite{li2014periodic} and \cite{xu2015globalexponential}), so define:

\begin{equation}
R_0^{T} = \rho([F]V^{-1}),
\end{equation}
where $V$ is given by \eqref{ec6} and  $$[F]= \left( \begin{matrix}
 0 & [\beta][\hat{S}] f'(0) \\ 0 & 0
\end{matrix} \right),  $$
with $[\beta], [\hat{S}]$ the average of functions $\beta, \hat{S}$ defined as $[\beta]= \frac{1}{LT} \int_0^{LT} \beta(t)dt,$ $ \hat{S}= \frac{1}{LT} \int_0^{LT} \hat{S}(t) dt$. Computing each average we obtain $R_0^{T}= 549.6702634 \beta_0$, so $R_0^{T}>1$ for $\beta_0 \in (0.001819272510, \infty).$ \par 

Following theorem 2.1 of \cite{wang2008threshold}, to compute $\mathcal{R}_0$, let  $W(t,s, \lambda), t \geq s,$  the evolution operator of the system
 \begin{equation}
 \frac{dw}{dt}= \left(-V(t)+ \frac{F(t)}{\lambda} \right)w \label{ec21}
 \end{equation}
 ie, for each $\lambda \in (0, \infty)$, $\frac{dW(t,s,\lambda)}{dt}=\left(-V(t)+ \frac{F(t)}{\lambda} \right)W (t,s, \lambda), \forall t \geq s$ and $W(s,s, \lambda)=I_{2 \times 2}$, then $\mathcal{R}_0>0$ is the unique solution of $ \rho(W(LT,0,\lambda))=1. $ 

\begin{ex}
To illustrate our results, first fix $\beta_0=0.0018$. Computing $R_0^{T}$ we have $R_0^{T}= 0.9894064741$, which is a first approximation of $R_0$. To solve system \eqref{ec21} numerically, we substitute the terms of expression of $\hat{S}(t)$ in \eqref{ec19}:
\begin{align*}
\hat{S}(t)=&{{\rm e}^{- 0.1200000000\,t- 0.0006366197724\,\sin \left(  6.283185307
\,t \right) }} \\
& \left(  54999.33689+ 6600.0\,\int _{ 0.0}^{t}\!{{\rm e}
^{ 0.1200000000\,s+ 0.0006366197724\,\sin \left(  6.283185307\,s
 \right) }}{ds} \right) 
\end{align*}

 Due to we can not compute analytically the term
 $$\int _{ 0.0}^{t}\!{{\rm e}
^{ 0.1200000000\,s+ 0.0006366197724\,\sin \left(  6.283185307\,s
 \right) }}{ds},$$
   we approach $\hat{S}(t)$ using Taylor expansion around 0 (remember that we want so solve $ \rho(W(LT,0,\lambda))=1,$ where $LT=1$), so even when we can not find an explicit expression for $\hat{S}(t)$, the Taylor expansion is a good way to estimate it in $(0,1)$ .
 
\begin{figure}[H]
\begin{center}
\includegraphics[scale=0.4]{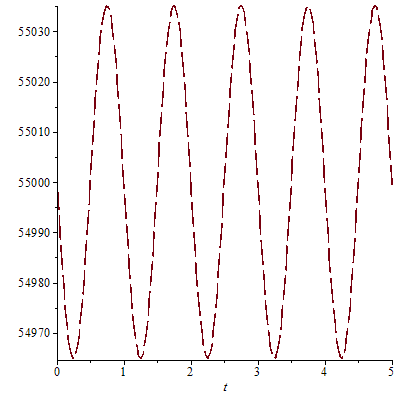}
\includegraphics[scale=0.4]{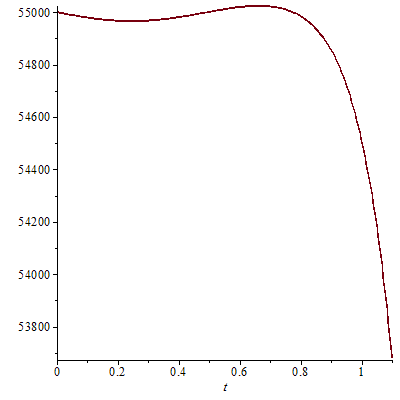}
\end{center}
\caption{$S$  component of infection free periodic solution. Time $t$ is given in years. Left: $\hat{S}(t)$, right: taylor expansion of $\hat{S}$ around $t=0$.}  \label{figure1}
\end{figure}
Setting an initial value $\lambda_0=0.98$ and letting $\lambda_i= \lambda_0 + i (0.0001)$, we solve system \eqref{ec21} numerically for each $\lambda_i$ (using initial conditions $w(0)=(1,0)$ and $w(0)=(0,1)$, to satisfy $W(0,0)=I_{2 \times 2}$ ), and compute $ \rho_1= \rho(W(LT,0,\lambda_i))$ until $\rho_1 \sim 1$ \ . With previous process we arrive to $\rho_1= 1.00120166209265 $ for $\lambda=0.9872$ and $ \rho_1 = 0.997826338969630$ 
for $\lambda=0.9873$, therefore $\mathcal{R}_0 \in (0.9872, 0.9873)$. Using a finer step size $0.0000001$ to have more accuracy, we arrive to $\mathcal{R}_0 \sim 0.9872355<1$. \par 
Set initial values as $S(0)=1,500,000, E(0)=400,000, I(0)=40,000,R(0)=N-(S(0)+E(0)+I(0))$. 

\begin{figure}[H] 
\includegraphics[scale=0.5]{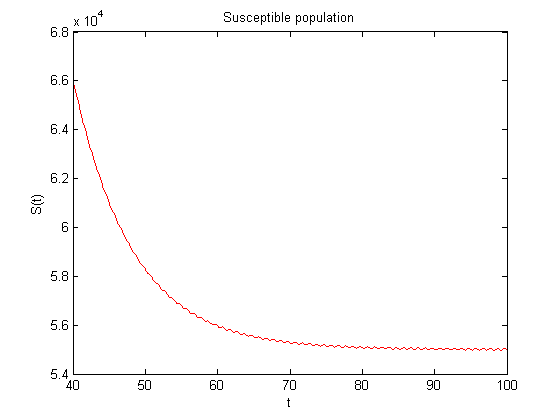}
\includegraphics[scale=0.5]{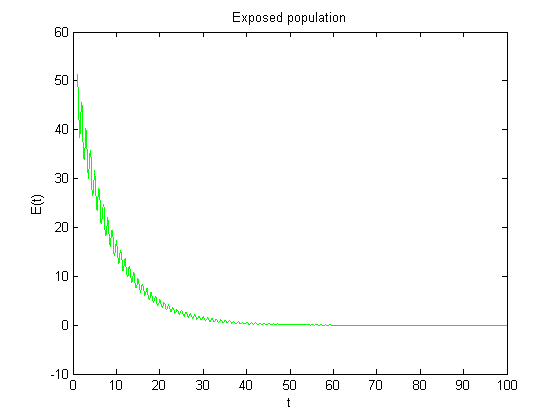}
\includegraphics[scale=0.5]{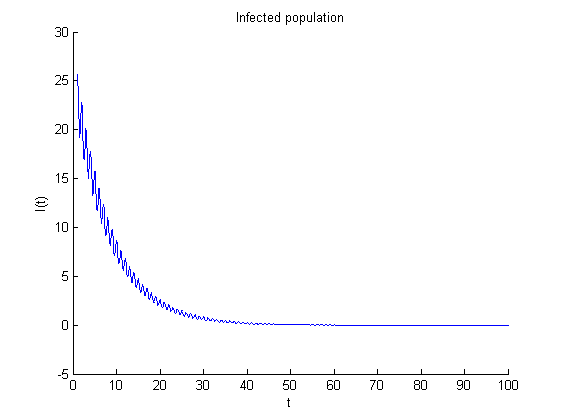}
\includegraphics[scale=0.5]{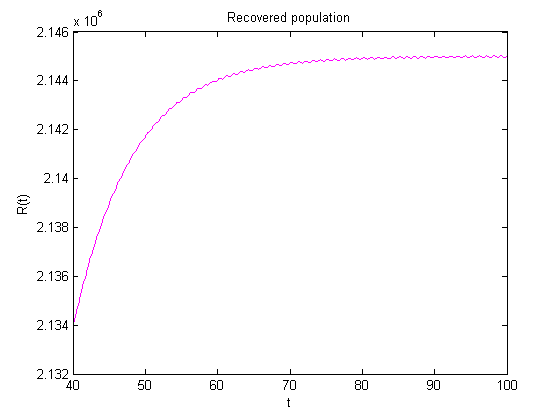}
\caption{Solution of SEIR system when $\mathcal{R}_0<1$. Time $t$ is given in years} \label{figure2}
\end{figure}

 We use Matlab algorithms to graph the solution of system \eqref{ec18} with these initial conditions. Figure \eqref{figure2} shows the results. We can see that $I(t), E(t)$ goes to cero, while $S(t),R(t)$ tend to stabilize, also  $S(t)$ is tending to $\hat{S}(t)$ with values between 54,000 and 56,000 (see figure \eqref{figure1} ), this shows the results obtained in theorem  \eqref{teo2}  . \par 

\end{ex}

\begin{figure}[H] 
\includegraphics[scale=0.5]{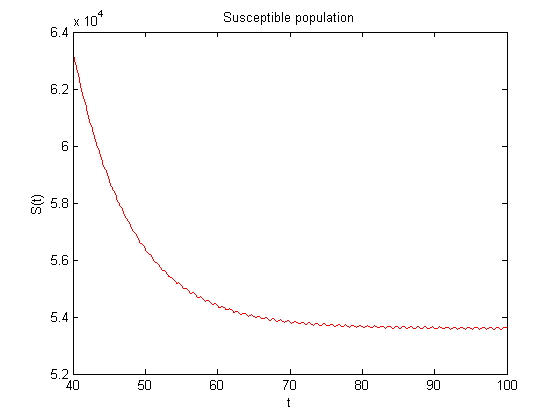}
\includegraphics[scale=0.5]{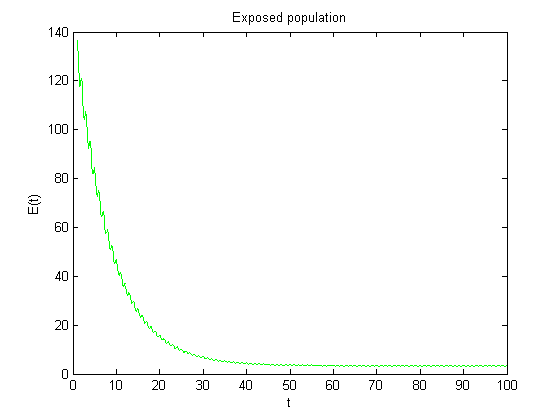}
\includegraphics[scale=0.5]{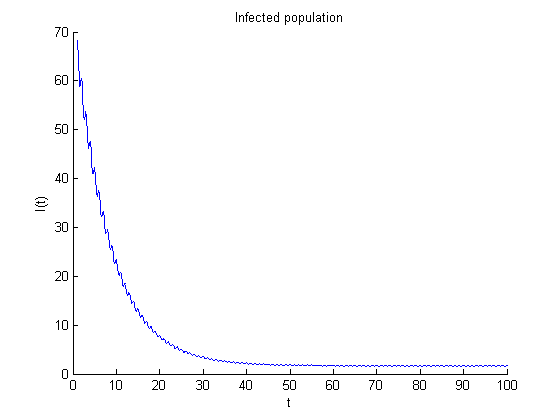}
\includegraphics[scale=0.5]{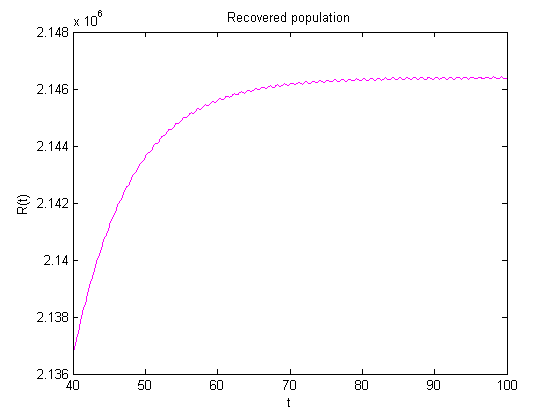}
\caption{Solution of SEIR system when $\mathcal{R}_0>1$. Time $t$ is given in years} \label{figure3}
\end{figure}

\begin{ex}
Now, choose $\beta_0=0.005$. As we can see in figure \eqref{figure3}, the solutions of system remain persistent when $t$ tends to infty, this fact suggest that $\mathcal{R}_0>1$ from theorem \eqref{teo3}. In fact, if we compute the basic reproduction number and its average (using the process described in example 1), $\mathcal{R}_0^{T}=3.298021580 $ and $ \mathcal{R}_0 \in (2.7456,2.7457) $, therefore it is bigger than one. In fact, this shows the results of persistence obtained in theorem \eqref{teo3}.
\end{ex}

\section{Conclusion}
In this paper we presented a model with seasonal fluctuation with a general incidence function that includes the bilinear case studied by \cite{bai2012global}. We proved that $\mathcal{R}_0$ is a threshold parameter for stability and persistence of system, giving also some numerical simulations that show these results. \par 
Several authors (for example \cite{li2014periodic} and \cite{xu2015globalexponential}) define $\mathcal{R}_0^{T}$ as the basic reproduction number, but we can see in numerical simulations that $\mathcal{R}_0^{T}$ is  not equal to $\mathcal{R}_0$ defined by \cite{wang2008threshold}, which is a real threshold parameter for extinction and persistence of disease. \par 
To obtain the estimation of $R_0$ we used a code in Maple, which is based on numerical computing $  \rho_1= \rho(W(LT,0,\lambda_i))$ until $\rho_1 \sim 1$, where $\lambda_i=\lambda_0+ \Delta_\lambda i $, $\Delta_\lambda$ is the step size and the  initial estimation $\lambda_0$ is taken as $R_0^{T}- \epsilon$. The Maple code is available to anyone who wants to use it. \par 


\section{Acknowledgements}
This article was supported in part by Mexican SNI under grant 15284 and 33365.

\appendix

\section{Appendix A:  Assumption used in \eqref{teo3} } \label{apa} 
 Let $f(m):= \Vert P^{m}(S_0,E_0,I_0)-M_i \Vert$. If
\begin{equation}
\limsup_{m \rightarrow \infty} f(m) < \delta, \quad \text{for some} \quad (S_0,E_0,I_0)\in X_0, \quad i=1,2, 
\end{equation}
then we have $L= \lim_{m \rightarrow \infty} \left( \sup_{n \geq m} f(n)  \right)< \delta $. For all $ \epsilon>0 $ there exists a $M_\epsilon>0$ such that if $m \geq M_\epsilon$ then $ -\epsilon< \sup_{n \geq m} f(n)-L< \epsilon $. Particularly, for $\epsilon= \frac{\delta-L}{2}>0$ we have $$ \sup_{n \geq m} f(n)-L< \delta-L ,$$
or equivalently, $\sup_{n \geq m} f(n)< \delta$ for $m \geq M_{\delta-L}$. Moreover, for all $n \geq m$ with $m \geq M_{\delta-L}$ we have $f(n)< \sup_{n \geq m} f(n)< \delta $. Therefore, $\Vert P^{n}(S_0,E_0,I_0)-M_i \Vert< \delta$, $\forall n\geq M_{\delta_L}$. \par 
We can take $(S^{1}_0,E^{1}_0,I^{1}_0)=P^{M_{\delta-L}} (S_0,E_0,I_0)$ as initial condition and therefore,
$$ \Vert P^{n}(S^{1}_0,E^{1}_0,I^{1}_0)-M_i \Vert< \delta, \quad \forall n \geq 0, $$
making our assumption valid.  

So, we can assume without loss of generality that  $\Vert P^{m}(S_0,E_0,I_0)-M_i \Vert< \delta  $ for all $m \geq 0$ . \par 
 
\section{Appendix B} \label{apb} 

 Note that $ \hat{S}(t) $ has a positive minimum $min$(is periodic, positive and continuous, so it is bounded for $t \in [0,LT]$ and then for all $t>0$) and we can choose a $min>\eta>0$,   sufficiently small such that $ \hat{S}(t)-  \eta >0 $. 
 
\section{Appendix C: expression 36} \label{apc}
From system \eqref{ec2} $ \frac{dE}{dt}= \beta(t)Sf(I)-(\mu + \sigma) E,$ with $S(t)> \hat{S}(t)- \eta$ for $t>T$, so 
$$ \dfrac{dE}{dt} \geq \beta(t) ( \hat{S}(t)- \eta )f(I)- (\mu + \sigma)E, \quad \text{for} \quad t>T. $$  
Using assumption (A5) for $f(I)$ and positivity of $ \hat{S}(t)- \eta $, we have also 
$$ f(I) (\hat{S}(t)- \eta) \geq (\hat{S}(t)- \eta)[If'(0)+ \frac{1}{2} I^{2} f''(0) ]. $$
So, 
\begin{align*}
\dfrac{dE}{dt} \geq & \beta(t) (\hat{S}(t)- \eta)[If'(0)+ \frac{1}{2} I^{2} f''(0)]- ( \mu+ \sigma) E ,\\
&= \beta(t)(\hat{S}(t)- \eta) If'(0) + \frac{1}{2} \beta(t)(\hat{S}(t)- \eta) f''(0)I^{2}-(\mu + \sigma) E.
\end{align*}
Due to $0<I<\alpha$ and $f''(0)\leq 0$, then $I^{2}< \alpha I$ and $f''(0)I^{2} \geq f''(0) \alpha I$, applying this we arrive to
\begin{align*}
\dfrac{dE}{dt} & \geq \beta(t)(\hat{S}(t)- \eta) If'(0)+ \frac{1}{2} \beta(t) (\hat{S}(t)- \eta) f''(0) \alpha I, \\
\dfrac{dI}{dt} &= \sigma E-(\mu + \sigma)I.
\end{align*}
This expression can be written as \eqref{ec16}.

\section{Appendix D: Auxiliar from theorem 4} \label{apd}
The system used in the proof of theorem \eqref{teo3} is 
$$ \dfrac{d\bar{S}}{dt} =N (\mu(1-p)+ \delta)-2 \delta \alpha - (\beta(t)f'(0) \alpha+ \mu + r(t)+ \delta)\hat{S} .$$
Solving the equation above, we arrive to the general solution
\begin{align*}
\bar{S}(t) = e^{ -\int_{t_0}^{t}p(s)ds } \left[ \bar{S}(t_0)+ (N(\mu (1-p)+ \delta)-2 \delta \alpha) \int_{t_0}^{t} e^{\int_{t_0}^s ( p( \zeta ) d\zeta ) }ds \right], \label{apd1}
\end{align*}
where $p(s)= \beta(s)f'(0) \alpha+ \mu + r(s)+ \delta$. We shall examine the behaviour of an arbitrary solution $S$. For each $n=0,1,...$ we can use an initial time $\bar{t}_0= t_0+nLT$ with initial point $\bar{S}(\bar{t}_0)$ and see that:

\begin{align*}
S(t_0+(n+1)LT)&= e^{- \int_{t_0+nLT}^{(t_0+nLT)+LT}p(s)ds }\\
& \left[ S(t_0+nLT)+ (N(\mu (1-p)+ \delta)-2 \delta \alpha) \int_{(t_0+nLT)}^{(t_0+nLT)+LT} e^{\int_{t_0+nLT}^s ( p( \zeta ) d\zeta ) ds} \right].
\end{align*}
Due to $p(s)$ is a periodic function, then $$ \int_{t_0+nLT}^{(t_0+nLT)+LT} p(s)ds = \int_{t_{0}}^{t_0+LT}p(s) ds = \int_0^{LT}p(s)ds ,\quad \int_{t_0+nLT}^{s} p(\zeta) d \zeta= \int_{t_0}^{s-nLT}p(\zeta)d\zeta, $$
where $s-nLT\geq t_0$. Then

\begin{align*}
S(t_0+(n+1)LT)&= e^{- \int_{t_0}^{(t_0+LT)}p(s)ds }\\
& \left[ S(t_0+nLT)+ (N(\mu (1-p)+ \delta)-2 \delta \alpha) \int_{(t_0+nLT)}^{(t_0+nLT)+LT} e^{\int_{t_0}^{s-LT} ( p( \zeta ) d\zeta ) }ds \right].
\end{align*}
And using the change of variable $u=s-LT$, then 

\begin{align}
S(t_0+(n+1)LT)&= e^{- \int_{t_0}^{(t_0+LT)}p(s)ds }\\
& \left[ S(t_0+nLT)+ (N(\mu (1-p)+ \delta)-2 \delta \alpha) \int_{t_0}^{t_0+LT} e^{\int_{t_0}^{u} ( p( \zeta ) d\zeta ) }du \right]. \label{apd2}
\end{align}

Equation \eqref{apd2} gives a recursive relationship between the solution at $t_0+nLT$ and after $LT$ times. If we set $S_n=S(t_0+nLT)$,then for each solution $S$ this relationship is described by:
$$S_{n+1}=F(S_n),$$
with $F$ the right side of \eqref{apd2}. If we take $S_i$ and $S_j$, two different values of ${S_n}$, then 
$$|F(S_i)-F(S_j)|=e^{- \int_{T_0}^{t_0+LT} p(s)ds} |S_i-S_j| \leq |S_i-S_j| \leq e^{-(\mu+ \delta)LT} |S_i-S_j|.  $$

Then, $F(S)$ is a contracting map and by Banach fixed point theorem $F$ has a unique fixed point $S_i$ such that $S_{i+1}=F(S_i)=S_i$, or equivalently, $S(t_0+iLT)=S(t_0+(i+1)LT)$. This fixed point can be found for any $S$ that is solution of differential equation with arbitrary initial condition $S(t_0)$ at any time $t_0$. The fixed point has the form:
$$ S(t_0^{*})= \dfrac{( N(\mu(1-p)+ \delta)-2 \delta\alpha) \int_{t_0^{*}}^{t_0^{*}+LT} \left( e^{\int_{{t_0}^{*}} ^{u}p(s)ds} \right) du}{e^{\int_{0}^{LT} p(s)ds }-1}. $$
So, define the function 
$$   S^{*}(t)= \dfrac{( N(\mu(1-p)+ \delta)-2 \delta\alpha) \int_{t}^{t+LT} \left( e^{\int_{t} ^{u}p(s)ds} \right) du}{e^{\int_{0}^{LT} p(s)ds }-1}. $$
$S^{*}$ is a periodic function with period $LT$ and is continuously differentiable with respect to $t$. One can check (by computing the derivative) that $S^{*}(t)$ is a solution of differential equation, so by existence and uniqueness of solutions it can be rewritten as \eqref{ec22} with initial condition \eqref{ec23}.
\bibliographystyle{plain}
\bibliography{references}
 \end{document}